\DeclareMathSymbol{\twoheadrightarrow} {\mathrel}{AMSa}{"10}
\def\Q{{\mathbb Q}}
                                              \def\mm{{\mathfrak m}}
\def\Z{{\mathbb Z}}
\def\Gal{\mathrm{Gal}}
                          \def\tors{\mathrm{tors}}
\def\Aut{\mathrm{Aut}}
\def\fchar{\mathrm{char}}
        \def\K_a{\bar{K}}
\def\Gu{{\rm G}_}
\def\dim{\mathrm{dim}}
                                              \def\Pic{\mathrm{Pic}}
\def\K{{\mathcal{K}}}
\def\bet{{\mathbf b}}
\newtheorem{thm}{Theorem}[section]
\newtheorem{lem}[thm]{Lemma}
\newtheorem{cor}[thm]{Corollary}
\newtheorem{prop}[thm]{Proposition}
\theoremstyle{definition}
\newtheorem{ex}[thm]{Example}
\newtheorem{rem}[thm]{Remark}
\newtheorem{sect}[thm]{}
\title[Odd-dimensional cohomology]{Odd-dimensional cohomology with finite coefficients and roots of unity}
\author[Yuri G.\ Zarhin]{Yuri G.\ Zarhin}
\thanks{This work was partially supported by a grant from the Simons Foundation (\#246625 to Yuri Zarkhin).}
\address{Department of Mathematics, Pennsylvania State University,
University Park, PA 16802, USA}
\email{zarhin\char`\@math.psu.edu}
\begin{document}

\begin{abstract}
We prove that the triviality of the Galois action on the suitably twisted {\sl odd-dimensional} \'etale cohomology group with finite coefficients  of an absolutely irreducible smooth projective variety implies the existence of certain primitive roots of unity in the field of definition of the variety. This text was inspired  by an exercise in Serre's Lectures on the Mordell--Weil  theorem.
\end{abstract}

\maketitle

\section{Introduction}
\label{intro}

We  recall some basic facts about {\sl cyclotomic characters}.
Let $K$ be a field, $\bar{K}$ its algebraic closure,
$\Gu K=\Aut(\bar{K}/K)$ the absolute Galois group of $K$. Let $n$
be a positive integer  that is {\sl not} divisible by $\fchar(K)$.
We write $\mu_n \subset \bar{K}$ for the
cyclic multiplicative group of $n$th roots of unity in $\bar{K}$.  We write
$$\bar{\chi}_n: \Gu K \to \Aut(\mu_n)=(\Z/n\Z)^{*}$$
for the cyclotomic character that defines the Galois action on $n$th roots of unity. Clearly,
$\mu_n \subset K$ if and only if 
$$\bar{\chi}_n(g)=1 \ \forall g \in \Gu K.$$
Recall that the order of $(\Z/n\Z)^{*}$ is $\phi(n)$ where $\phi$ is the {\sl Euler function}. This implies that
$${\bar{\chi}_n}^{\phi(n)}(g)=1 \ \forall g \in \Gu K.$$

Let $K(\mu_n) \subset \bar{K}$ be the $n$th cyclotomic extension of $K$. Then the 
degree $[K(\mu_n):K]$ of the abelian extension $K(\mu_n)/K$ coincides with the order of the finite commutative Galois  group $\Gal(K(\mu_n)/K).$  By definition of $\bar{\chi}_n$, its
 kernel  coincides with $\Gu {K(\mu_n)}$ and $\bar{\chi}_n$ is the composition of the surjection
$$\Gu K\mapsto \Gu K/\Gu {K(\mu_n)}= \Gal(K(\mu_n)/K)$$
and the   embedding
$$\Gal(K(\mu_n)/K)=\hookrightarrow (\Z/n\Z)^{*},$$
which we continue to denote by  $\bar{\chi}_n$, slightly abusing notation.

\begin{rem}
\label{even}
Clearly, the {\sl exponent} $\exp(n,K)$   of  $\Gal(K(\mu_n)/K)$  divides the order of $\Gal(K(\mu_n)/K)$, which, in turn, divides $\phi(n)$. In addition, if $f$ is an integer then the character $\bar{\chi}_n^f$ is {\sl trivial} if and only if $f$ is divisible by $\exp(n,K)$.  In particular, the characters $\bar{\chi}_n^{\phi(n)}$ and $\bar{\chi}_n^{\exp(n,K)}$ are {\sl trivial}. On the other hand,
if the degree of the  extension $K(\mu_n)/K$ is {\sl even} then  so is $\exp(n,K);$   this implies that if $f$ is an {\sl odd} integer then the character $\bar{\chi}_n^f$ is {\sl nontrivial}.
\end{rem}

\begin{rem}
\label{splitchar}
If $m$ is another positive integer that is  relatively prime to $n$ and $\fchar(K), $ then the map
$$\mu_n \times \mu_m \to \mu_{nm},  \ (\gamma_1,\gamma_2)\mapsto \gamma_1\gamma_2$$
is an isomorphism of groups (and even Galois modules). The natural map
$$\phi_{n,m}:\Z/nm\Z  \to \Z/n\Z \times \Z/m\Z, \ \ c+nm\Z \mapsto (c+n\Z,c+m\Z)$$ is a ring homomorphism and the group homomorphism
$$\bar{\chi}_{nm}: \Gu K\to (\Z/nm\Z)^{*}$$
coincides with
$$g \mapsto (\bar{\chi}_{n}(g),\bar{\chi}_{m}(g)) \in (\Z/n\Z)^{*}\times (\Z/m\Z)^{*}\stackrel{\phi_{n,m}^{-1}}{\longrightarrow} (\Z/nm\Z)^{*}.$$\end{rem}

If $A$
is an abelian variety over $K$ then we write $A[n]$ for the kernel
of multiplication by $n$ in $A(\bar{K})$. It is well known that $A[n]$ is a
finite Galois submodule of $A(\bar{K}).$ If we forget about the
Galois action then $A[n]$ is a free $\Z/n\Z$-module of rank
$2\,\dim(A)$.

The following assertion is stated without  proof, as an exercise,
in Serre's Lectures on the Mordell--Weil Theorem \cite[Sect. 4.6, p.
55]{SerreMW}.

\begin{thm}
\label{SerreEx}
If $\dim(A)>0$ and $A[n]\subset A(K)$ then $\mu_n \subset K$.
\end{thm}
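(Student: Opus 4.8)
The plan is to exploit the Weil pairing, which for our abelian variety $A$ of dimension $\dim(A)>0$ provides a perfect, $\Gu K$-equivariant bilinear pairing
\[ e_n\colon A[n]\times A^{\vee}[n]\to \mu_n, \]
where $A^{\vee}$ is the dual abelian variety; equivariance means $e_n(\sigma x,\sigma y)=e_n(x,y)^{\bar{\chi}_n(\sigma)}$ for all $\sigma\in\Gu K$. Since $A$ is projective over $K$, an ample line bundle defined over $K$ yields a nonzero polarization $\lambda\colon A\to A^{\vee}$ defined over $K$, hence a $\Gu K$-equivariant homomorphism on $n$-torsion. Composing, I obtain the $\Gu K$-equivariant pairing $E_n(x,y)=e_n(x,\lambda(y))$ on $A[n]\times A[n]$. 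The hypothesis $A[n]\subset A(K)$ says exactly that $\Gu K$ acts trivially on $A[n]$, so for every $\sigma\in\Gu K$ and all $x,y\in A[n]$ we get $E_n(x,y)=E_n(\sigma x,\sigma y)=E_n(x,y)^{\bar{\chi}_n(\sigma)}$, that is $E_n(x,y)^{\bar{\chi}_n(\sigma)-1}=1$.

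To finish, it suffices to produce a single pair $(x,y)$ for which $E_n(x,y)$ is a \emph{primitive} $n$th root of unity: then $\bar{\chi}_n(\sigma)\equiv 1\pmod n$ for all $\sigma$, which is precisely $\mu_n\subset K$. By perfectness of $e_n$, the assignment $w\mapsto e_n(\cdot,w)$ identifies $A^{\vee}[n]$ with $\Hom(A[n],\mu_n)$, so if $w\in A^{\vee}[n]$ has exact order $n$ then $e_n(\cdot,w)$ is a surjection onto the cyclic group $\mu_n$ and hits a primitive root. Thus everything reduces to the claim that the image $\lambda(A[n])\subset A^{\vee}[n]$ contains an element of exact order $n$.

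The main obstacle is precisely this last claim, since $\lambda$ need not be an isomorphism on $n$-torsion when $\deg(\lambda)$ shares a factor with $n$. I would resolve it by replacing $\lambda$ with its primitive part: because $\Hom_K(A,A^{\vee})$ is a finitely generated torsion-free $\Z$-module that is saturated in $\Hom_{\bar{K}}(A,A^{\vee})$ (the primitive part of a $K$-rational homomorphism is again $K$-rational, by uniqueness of division), the nonzero $\lambda$ may be written uniquely as $\lambda=r\lambda_0$ with $r\ge 1$ and $\lambda_0\in\Hom_K(A,A^{\vee})$ not divisible by any prime. For a prime $\ell\mid n$ (necessarily $\ell\neq\fchar(K)$), the inclusion $A[\ell]\subseteq\ker\lambda_0$ would force $\lambda_0$ to factor through $[\ell]\colon A\to A$, hence be divisible by $\ell$, contradicting primitivity; so $\lambda_0$ does not kill all of $A[\ell]$. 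Writing $n=\prod_{\ell}\ell^{a_\ell}$ and using $\ell^{a_\ell-1}A[\ell^{a_\ell}]=A[\ell]$, this produces for each $\ell$ an element of $A[\ell^{a_\ell}]$ whose image under $\lambda_0$ has order $\ell^{a_\ell}$; summing these over the primes dividing $n$ gives $w=\lambda_0(y)$ of exact order $n$. Running the argument of the preceding paragraphs with $\lambda_0$ in place of $\lambda$ then completes the proof. (For $\dim(A)=1$ one may skip this step, since the canonical principal polarization already identifies $A$ with $A^{\vee}$.)
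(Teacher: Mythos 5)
Your proof is correct and follows essentially the same route as the paper: compose the nondegenerate, Galois-equivariant Weil pairing with a $K$-rational homomorphism $A\to A^t$ that is not divisible by any prime dividing $n$, find a point of exact order $n$ in the image of $A[n]$, and pair against it to exhibit a primitive $n$th root of unity fixed by $\Gu K$. The only differences are technical and harmless: the paper first reduces to $n=\ell^r$ and secures non-divisibility by taking a $K$-polarization of \emph{minimal degree}, whereas you keep $n$ composite and instead pass to the primitive part $\lambda_0$ of an arbitrary $K$-polarization (justified by the saturation of $\Hom_K(A,A^t)$ in $\Hom_{\bar{K}}(A,A^t)$), assembling the exact-order point prime by prime.
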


\begin{proof}
First, it suffices to check the case when $n=\ell^r$ is a power of a
prime $\ell \ne \fchar(K)$.

Second, if $A^t$ is the dual of $A$ then let us take a
$K$-polarization $\lambda: A \to A^{t}$ of smallest possible degree.
Then $\lambda$  is  not divisible by $\ell$, i.e., $\ker(\lambda)$
does not contain the whole $A[\ell]$. Otherwise,  divide
$\lambda$ by $\ell$ to get a $K$-polarization of lower degree.
Thus the image $\lambda(A[\ell^r])\subset A^t[\ell^r]$ contains a
point of exact order $\ell^r$, say $Q$. Otherwise,
$$\lambda(A[\ell^r])\subset A^t[\ell^{r-1}]$$
 and therefore $A[\ell]=\ell^{r-1}A[\ell^r]$ lies in the kernel of $\lambda$, which is not the case.

Since $A[\ell^r]\subset A[K]$ and $\lambda$ is defined over $K$, the
image $\lambda(A[\ell^r])$ lies in $A^t(K)$. In particular, $Q$ is a
$K$-rational point on $A^t$.

Third, there is a {\sl nondegenerate} Galois-equivariant Weil
pairing \cite{LangAV}
$$e_n: A[\ell^r] \times A^t[\ell^r] \to \mu_{\ell^r}.$$
I claim that there is a point $P \in A[\ell^r]$ such that $e_n(P,Q)$
is a primitive $\ell^r$th root of unity. Indeed, otherwise
$$e_n(A[\ell^r],Q) \subset \mu_{\ell^{r-1}}$$
so that  the nonzero point $\ell^{r-1}Q$ is orthogonal to the whole
$A[\ell^r]$ with respect to $e_n$, which contradicts the
nondegeneracy of $e_n$.

Thus, $\gamma:=e_n(P,Q)$
is a primitive $\ell^r$th root of unity that lies in $K$, because
both $P$ and $Q$ are $K$-points. Since  $\mu_{\ell^r}$ is
generated by $\gamma$, $\mu_{\ell^r}\subset K.$
\end{proof}

The aim of this paper is to a prove a variant of Serre's exercise that deals with the Galois action on the twisted odd-dimensional \'etale cohomogy group  with finite coefficients of a smooth projective variety (see Theorem \ref{mainRoot} below). Our proof is based on   the Hard Lefschetz Theorem  \cite{DeligneW2} and the unimodularity of Poincar\'e duality \cite{ZarhinP}.

\begin{sect}
\label{basicX}
 If  $\Lambda$  is a commutative ring with $1$ and without zero
divisors and $M$ is a $\Lambda$-module,  then we write $M_{\tors}$
for its torsion submodule and $M/\tors$ for the quotient
$M/M_{\tors}$. Usually, we will use this notation when $\Lambda$ is
 the ring $\Z_{\ell}$ of $\ell$-adic
integers.

If $\ell$ is a prime different from $\fchar(K)$ then we write $\Z_{\ell}(1)$ for the projective limit
of the cyclic Galois modules $\mu_{\ell^r}$ with  $\ell$th power as transition map. It is known that
$\Z_{\ell}(1)$  is a free $\Z_{\ell}$-module of rank $1$ with natural continuous action of $\Gu K$ defined by the {\sl cyclotomic character}
$$\chi_{\ell}: \Gu K \to \Aut_{\Z_{\ell}}(\Z_{\ell}(1))=\Z_{\ell}^{*}.$$
There are canonical isomorphisms
$$\Z_{\ell}/\ell^r \Z_{\ell}=\Z/\ell^r\Z, \ \Z_{\ell}(1)/\ell^r\Z_{\ell}(1)=\mu_{\ell^r};$$
in addition
$$\chi_{\ell}\bmod \ell^r=\bar{\chi}_{\ell^r}$$
for all positive integers $r$.   

We write $\Q_{\ell}(1)$ for the one-dimensional $\Q_{\ell}$-vector space 
$$\Q_{\ell}(1)=\Z_{\ell}(1)\otimes_{\Z_{\ell}}\Q_{\ell}$$
provided with the natural Galois action that is defined by the character $\chi_{\ell}$. For each integer $a$ we will need the $a$th tensor
power $\Q_{\ell}(a):=\Q_{\ell}(1)^{\otimes a}$, which is  a one-dimensional $\Q_{\ell}$-vector space provided with the  Galois action that is defined by the character $\chi_{\ell}^a$.

Let
$X$ be an absolutely irreducible  smooth projective variety over $K$ of positive dimension $d=\dim(X)$. We write $\bar{X}$ for the irreducible smooth projective $d$-dimensional variety $X \times_K \bar{K}$ over $\bar{K}$.
Let  $\ell$ be a prime $\ne
\fchar(K)$ and $a$ an integer. If  $i \le 2d$ is a nonnegative
integer then we write $H^i(\bar{X},\Z_{\ell}(a))$  for the corresponding twisted $i$th
\'etale $\ell$-adic cohomology group.
Recall that all the \'etale cohomology groups $H^i(\bar{X},\mu_n^{\otimes a})$ 
 are finite $\Z/n\Z$-modules
and that
 the $\Z_{\ell}$-modules $H^i(\bar{X},\Z_\ell(a))$  are finitely generated. In particular,  each  $H^i(\bar{X},\Z_{\ell}(a))/\tors$ is a free $\Z_{\ell}$-module of finite rank. These
finiteness results are fundamental finiteness theorems in \'etale
cohomology from {\bf SGA 4, 4$\frac{1}{2}$, 5}, see \cite{FK} and \cite[pp.
22--24]{Katz} for precise references.  All these groups are provided with the natural linear continuous actions of $\Gu K$. We also consider the corresponding finite-dimensional $\Q_{\ell}$-vector spaces
$$H^i(\bar{X},\Q_\ell(a))=H^i(\bar{X},\Z_\ell(a))\otimes_{\Z_{\ell}}\Q_{\ell}.$$
The Galois action on $H^i(\bar{X},\Z_\ell(a))$ extends by $\Q_{\ell}$-linearity to 
$H^i(\bar{X},\Q_\ell(a))$.  There are natural isomorphisms of $\Gu K$-modules
$$H^i(\bar{X},\Q_\ell(a+b))=H^i(\bar{X},\Q_\ell(a))\otimes_{\Q_{\ell}}\Q_{\ell}(b)$$
for all integers $a$ and $b$.  

\begin{rem}
\label{split}
If  a positive integer $m$  is relatively prime to $n$ and $\fchar(K)$, then the splitting $\mu_{nm}=\mu_n\times \mu_m$ induces the splitting of Galois modules
$$H^i(\bar{X},{\mu_{nm}}^{\otimes a})=H^i(\bar{X},{\mu_n}^{\otimes a})\oplus H^i(\bar{X},{\mu_m}^{\otimes a}).$$
\end{rem}

The $\Q_{\ell}$-dimension of $H^i(\bar{X},\Q_\ell(a))$ is denoted by $\bet_i(\bar{X})$ and called the $i$th {\sl Betti number} of $\bar{X}$: it does not depend on a choice of ($a$ and) $\ell$. In characteristic zero it follows from the comparison theorem between classical and \'etale cohomology \cite{Milne}. In finite characteristic the independence follows from results of Deligne \cite{DeligneW1}. It is also known that $\bet_i(\bar{X})=0$ if $i>2d$ \cite{Katz,FK}.
\end{sect}

Our main result is the following statement.

\begin{thm}
\label{mainRoot}
Let $i$ be a nonnegative integer.
\begin{itemize}
\item[(i)]
 Suppose that $i\le d-1$ and $\bet_{2i+1}(\bar{X})\ne 0$. If the Galois action on 
$H^{2i+1}(\bar{X},{\mu_n}^{\otimes i})$ is trivial then $\mu_n \subset K$.
\item[(ii)]
Suppose that $1\le i\le d$ and $\bet_{2i-1}(\bar{X})\ne 0$. If the Galois action on 
$H^{2i-1}(\bar{X},{\mu_n}^{\otimes i})$ is trivial then $\mu_n \subset K$.
\end{itemize}
\end{thm}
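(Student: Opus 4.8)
The plan is to reduce both parts to a single statement about a self-pairing on odd-degree cohomology and then exploit Galois-equivariance. First I would reduce to the case $n=\ell^r$ a prime power: by Remark \ref{split} the module $H^{2i+1}(\bar{X},\mu_{n}^{\otimes i})$ decomposes as the direct sum of the $H^{2i+1}(\bar{X},\mu_{\ell^r}^{\otimes i})$ over the prime-power divisors $\ell^r\| n$, triviality of the Galois action passes to each summand, and $\mu_n\subset K$ holds iff $\mu_{\ell^r}\subset K$ for every such $\ell^r$. Next I would observe that (i) and (ii) are interchanged by Poincar\'e duality: the cup product
$$H^{2i+1}(\bar{X},\mu_{\ell^r}^{\otimes i})\times H^{2d-2i-1}(\bar{X},\mu_{\ell^r}^{\otimes(d-i)})\to H^{2d}(\bar{X},\mu_{\ell^r}^{\otimes d})=\Z/\ell^r\Z$$
is a perfect $\Gu K$-equivariant pairing into the trivial module (this is the unimodularity of Poincar\'e duality \cite{ZarhinP}), so the Galois action is trivial on the left factor iff it is trivial on the right; and the right-hand group is precisely the part-(ii) group for the index $d-i$. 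Since the two degrees $2i+1$ and $2d-2i-1$ sum to $2d$, at least one of them is $\le d$; replacing the pair by its Poincar\'e dual if necessary, I may assume I am working with a group $H^{j}(\bar{X},\mu_{\ell^r}^{\otimes a})$ on which the Galois action is trivial, where $j$ is odd, $j\le d$, and $2a-j=\pm1$.

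On such a group I would build a self-pairing out of the Hard Lefschetz isomorphism. Let $\eta\in H^2(\bar{X},\mu_{\ell^r})$ be the reduction of the Galois-invariant hyperplane class, so that cup product with $\eta^{d-j}$ is the finite-coefficient avatar of the Lefschetz operator $L^{d-j}$. Setting
$$\psi(x,y)=x\cup \eta^{d-j}\cup y\in H^{2d}(\bar{X},\mu_{\ell^r}^{\otimes(2a+d-j)})\cong \mu_{\ell^r}^{\otimes(2a-j)}$$
gives a Galois-equivariant pairing on $H^{j}(\bar{X},\mu_{\ell^r}^{\otimes a})$, skew-symmetric because $j$ is odd, and valued in $\mu_{\ell^r}^{\otimes(\pm1)}$. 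The crucial point of the twist bookkeeping is that the theorem's choice of twist (namely $a=i$, giving $2a-j=-1$, respectively $a=d-i$, giving $2a-j=+1$) makes the target a single nontrivial Tate twist of $\mu_{\ell^r}$ rather than the trivial module; this is exactly what forces a root-of-unity conclusion. Combining the Hard Lefschetz theorem \cite{DeligneW2}, which makes $L^{d-j}$ an isomorphism since $d-j\ge0$, with the unimodularity of Poincar\'e duality \cite{ZarhinP}, I would argue that $\psi$ is perfect.

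Finally I would run the equivariance argument. Because the Galois action on $H^{j}(\bar{X},\mu_{\ell^r}^{\otimes a})$ is trivial, for every $g\in\Gu K$ and all $x,y$ one has $\psi(x,y)=\psi(gx,gy)=g\cdot\psi(x,y)$, so $g$ fixes the whole image of $\psi$. Since the relevant Betti number is nonzero by hypothesis (and $\bet_{2i+1}(\bar{X})=\bet_{2d-2i-1}(\bar{X})$ by Poincar\'e duality), the module is nonzero, and perfectness of $\psi$ into the cyclic module $\mu_{\ell^r}^{\otimes(\pm1)}\cong\Z/\ell^r\Z$ forces its image to generate the target; hence $g$ fixes a generator of $\mu_{\ell^r}^{\otimes(\pm1)}$, i.e.\ $\bar{\chi}_{\ell^r}^{\pm1}(g)=1$ for all $g$, which gives $\mu_{\ell^r}\subset K$. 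I expect the main obstacle to be precisely the perfectness claim at finite coefficients: Hard Lefschetz is intrinsically a statement over $\Q_{\ell}$, so descending to $\mu_{\ell^r}$-coefficients requires controlling the $\ell$-torsion in the integral cohomology and verifying that $\cup\,\eta^{d-j}$ together with the unimodular cup-product pairing still yields a perfect pairing modulo $\ell^r$. This torsion bookkeeping, rather than the formal Galois argument, is the technical heart of the proof.
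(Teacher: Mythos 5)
Your opening reductions are sound: the prime-power splitting is exactly the paper's first step, and using perfect \emph{finite-coefficient} Poincar\'e duality (this is standard, \cite[Ch.~VI, \S 11]{Milne}; the result of \cite{ZarhinP} concerns the $\Z_\ell$-pairing modulo torsion) to interchange (i) and (ii) and reduce to $j\le d$, $2a-j=\pm 1$ is a legitimate variant. The proposal fails, however, at exactly the step you flag as the ``technical heart'': perfectness of $\psi(x,y)=x\cup\eta^{d-j}\cup y$ on $H^{j}(\bar{X},\mu_{\ell^r}^{\otimes a})$. This is not deferred bookkeeping; the claim is false in general, because rational Hard Lefschetz gives no control on the cokernel of the \emph{integral} Lefschetz map $\cup\,h^{d-j}\colon H^j(\bar{X},\Z_\ell(a))/{\tors}\to H^{2d-j}(\bar{X},\Z_\ell(a+d-j))/{\tors}$; this map is injective with finite cokernel, and whenever $\ell$ divides the order of that cokernel, $\psi$ is degenerate modulo $\ell$. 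A model example of the phenomenon: for a smooth hypersurface $X\subset \P^4$ of degree $m$, the lattices $H^2(\bar{X},\Z_\ell(1))$ and $H^4(\bar{X},\Z_\ell(2))$ both have rank one, Poincar\'e duality between them is unimodular and rational Hard Lefschetz holds, yet $\cup\,h$ is multiplication by $m$, so the self-pairing $x\cup h\cup y$ on $H^2$ is not perfect for any $\ell\mid m$. Nothing protects odd-degree cohomology from the same failure, so your key claim cannot be established from the inputs you invoke. (A smaller gap: for a finite $\Z/\ell^r\Z$-module, ``perfect and nonzero'' does not force the image of the pairing to generate $\Z/\ell^r\Z$ --- the pairing $(x,y)\mapsto \ell^{r-1}xy$ on $\Z/\ell\Z$ is perfect --- so you would also need an element of order exactly $\ell^r$, which does follow from $\bet_j(\bar{X})\ne 0$ via universal coefficients.)

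The paper's proof is engineered precisely to avoid any integral or finite-coefficient Lefschetz statement, and this is the idea your proposal is missing. Hard Lefschetz is used only over $\Q_\ell$, to identify the Galois representations $U_E=H^j(\bar{X},\Q_\ell(a))$ and $V_E(\chi)=H^{2d-j}(\bar{X},\Q_\ell(d-a))(\chi)$ with $\chi=\chi_\ell^{2a-j}$ (Proposition \ref{ajGeneral}); the only integral input is the unimodularity of the Poincar\'e pairing $e$ between $H^j(\bar{X},\Z_\ell(a))/{\tors}$ and $H^{2d-j}(\bar{X},\Z_\ell(d-a))/{\tors}$ \cite{ZarhinP}, which, unlike your Lefschetz form, really is perfect. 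The bridge between the rational and integral levels is Lemma \ref{balanced}: rescale the rational equivariant isomorphism $u$ by a power of $\ell$ so that $u(T_1)\subset T_2$ but $u(T_1)\not\subset \ell T_2$. Then a single vector does the work of your whole pairing: choose $x_0\in T_1$ with $y:=u(x_0)\notin \ell T_2$; triviality of the Galois action on $U/\ell^r U$ plus equivariance of $u$ make $y\bmod \ell^r$ invariant; since $y$ is not divisible by $\ell$, unimodularity of $e$ yields $x$ with $e(x,y)=1$; and then $\chi(g)=\tilde{e}(gx,gy)\equiv\tilde{e}(x,y)=1 \bmod \ell^r$ for all $g$ (Theorems \ref{mainLinear} and \ref{mainLinear2}). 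If you want to repair your write-up, replace the perfectness claim for $\psi$ by this lattice-rescaling argument; the rest of your structure (prime-power reduction, duality swap, final cyclotomic conclusion) can stay.
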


\begin{ex}
Let us take $i=1$. Then  {\sl Kummer theory} tells us that
$$H^{2i-1}(\bar{X},{\mu_n}^{\otimes i})=H^{1}(\bar{X},\mu_n)=\Pic(\bar{X})[n]$$
is the kernel of multiplication by $n$ in the Picard group $\Pic(\bar{X})$ of $\bar{X}$.  On the other hand if $B$ is an abelian variety over $K$ that is the Picard variety of $X$ \cite{LangAV} then $\dim(B)=\bet_1(\bar{X})$ and $B[n]$ is a Galois submodule of $H^{1}(\bar{X},\mu_n)$. If we know that the Galois action on $H^{1}(\bar{X},\mu_n)$ is trivial then the same is true for its submodule $B[n]$. Now if 
$\bet_1(\bar{X})\ne 0$ then $B \ne \{0\}$ and Theorem \ref{SerreEx} applied to $B$ implies that $\mu_n \subset K$. 
\end{ex}

Theorem \ref{mainRoot} may be viewed as a special case (when  $a =\frac{j \pm 1}{2}$) of the following statement.

\begin{thm}
\label{nGeneral}
Let $j$ be a nonnegative integer  and $\bet_j(\bar{X})\ne 0$. Let $a$ be an integer.  Assume that the Galois action on $H^j(\bar{X},{\mu_n}^{\otimes a})$ is trivial.  Then
$$\bar{\chi}_{n}^{2a-j}(g)
=1 \ \forall g \in G=\Gu K.$$
If, in addition,  $2a-j$ is relatively prime to 
$\phi(n)$ then $\mu_n \subset K$.
\end{thm}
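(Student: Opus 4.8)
The plan is to reduce to the case of a prime power and then extract a Galois-invariant root of unity of full order from a pairing, exactly as in the proof of Theorem~\ref{SerreEx}, but with the Weil pairing replaced by Poincar\'e duality and with the Hard Lefschetz theorem supplying the missing ``diagonal'' of the pairing. First I would dispose of the second assertion: granting the first one, $\bar{\chi}_n^{2a-j}$ is trivial, so by Remark~\ref{even} the exponent $\exp(n,K)$ divides $2a-j$; since $\exp(n,K)$ also divides $\phi(n)$ and $\gcd(2a-j,\phi(n))=1$, we get $\exp(n,K)=1$, i.e.\ $K(\mu_n)=K$ and $\mu_n\subset K$. For the first assertion, Remarks~\ref{splitchar} and~\ref{split} let me assume $n=\ell^r$ with $\ell\ne\fchar(K)$: writing $n=\prod_i \ell_i^{r_i}$, the module $H^j(\bar{X},\mu_n^{\otimes a})$ splits as a direct sum over the $\ell_i$, so triviality for $n$ forces triviality for each $\ell_i^{r_i}$, and $\bar{\chi}_n^{2a-j}=1$ iff each $\bar{\chi}_{\ell_i^{r_i}}^{2a-j}=1$.

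Fix $n=\ell^r$ and set $V=H^j(\bar{X},\mu_n^{\otimes a})$ and $W=H^{2d-j}(\bar{X},\mu_n^{\otimes(a+d-j)})$, where $d=\dim(X)$. The unimodularity of Poincar\'e duality \cite{ZarhinP} provides a $G$-equivariant perfect pairing of finite $\Z/n\Z$-modules
$$\langle\,,\,\rangle\colon V\times W\longrightarrow H^{2d}(\bar{X},\mu_n^{\otimes(2a+d-j)})\cong \mu_n^{\otimes(2a-j)},$$
on which $G=\Gu K$ acts on the target through $\bar{\chi}_n^{2a-j}$. Assuming the action on $V$ is trivial, equivariance gives $\langle v,gw\rangle=\langle gv,gw\rangle=\bar{\chi}_n^{2a-j}(g)\langle v,w\rangle=\langle v,\bar{\chi}_n^{2a-j}(g)\,w\rangle$ for all $v\in V$; nondegeneracy in the $W$-slot then forces $gw=\bar{\chi}_n^{2a-j}(g)\,w$ for every $w\in W$. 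Thus the only consequence of the hypothesis visible through Poincar\'e duality alone is that $G$ acts on $W$ by the \emph{scalar} $\bar{\chi}_n^{2a-j}(g)$, which by itself does not pin down the scalar.

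To break this symmetry I would bring in the Hard Lefschetz theorem \cite{DeligneW2}. Choosing an ample class $\eta\in H^2(\bar{X},\mu_n)$ coming from a $K$-rational (very ample) line bundle, the operator $x\mapsto x\cup\eta^{\,d-j}$ is $G$-equivariant and maps $V$ into $W$; over $\Q_\ell$ it is an isomorphism (here $\bet_{2d-j}(\bar{X})=\bet_j(\bar{X})\ne 0$ guarantees both spaces are nonzero). Because $V$ is $G$-trivial and the operator is equivariant, every element of its image $U:=(V\cup\eta^{\,d-j})\subseteq W$ is $G$-fixed; comparing with the scalar action shows $\bigl(\bar{\chi}_n^{2a-j}(g)-1\bigr)\,U=0$ for all $g$. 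Hence, if $U$ contains an element of \emph{exact} order $\ell^r$, then for each $g$ the relation $(\bar{\chi}_n^{2a-j}(g)-1)\cdot(\text{order-}\ell^r\text{ element})=0$ forces $\bar{\chi}_n^{2a-j}(g)\equiv 1 \pmod{\ell^r}$, which is the desired conclusion.

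The hard part is exactly this last point: producing an element of full order $\ell^r$ in the image of $\cup\,\eta^{\,d-j}$, i.e.\ showing that the Hard Lefschetz operator \emph{survives modulo $\ell$} rather than becoming divisible by $\ell$ on the lattice $H^j(\bar{X},\Z_\ell(a))/\tors$. Hard Lefschetz only guarantees an isomorphism after inverting $\ell$, so the cokernel could in principle absorb a factor of $\ell$; this is where I expect the unimodularity of the integral Poincar\'e pairing \cite{ZarhinP} to be essential, since modulo $\ell$ it identifies ``$\cup\,\eta^{\,d-j}$ is nonzero'' with ``the Lefschetz form $(x,y)\mapsto\langle x,\,y\cup\eta^{\,d-j}\rangle$ on $H^j(\bar{X},\Z_\ell(a))/\tors$ is not identically $\ell\cdot(\text{integral})$''. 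As a reassuring special case, when $j=d$ the Poincar\'e pairing is already a perfect \emph{self}-pairing on $V=H^d(\bar{X},\mu_n^{\otimes a})$ valued in $\mu_n^{\otimes(2a-d)}$, so no Lefschetz operator is needed: a primitive $v\in V$ pairs to an order-$\ell^r$ element with some $w\in V$, and since both are $G$-fixed this directly yields a $G$-fixed generator and $\bar{\chi}_n^{2a-d}=1$. The remaining task is to propagate this clean middle-dimensional argument to all $j$ by controlling the $\ell$-divisibility of $\cup\,\eta^{\,d-j}$, which is the crux of the proof.
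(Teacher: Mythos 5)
Your overall architecture matches the paper's: reduce to $n=\ell^r$ via Remarks \ref{splitchar} and \ref{split}, combine Poincar\'e duality with Hard Lefschetz, and finish by exhibiting a Galois-invariant element of exact order $\ell^r$ whose pairing behavior forces $\bar{\chi}_{\ell^r}^{2a-j}\equiv 1$. (Your disposal of the second assertion and your middle-dimensional case $j=d$ are both fine.) But the step you yourself call ``the crux'' is a genuine gap, and it cannot be closed along the line you suggest. You need the image of the finite-coefficient Lefschetz map $\cup\,\eta^{d-j}\colon V\to W$ to contain an element of exact order $\ell^r$, which amounts to the integral operator $\cup\,h^{d-j}$ on $H^j(\bar{X},\Z_\ell(a))/\tors$ being nonzero modulo $\ell$. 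Hard Lefschetz is a theorem over $\Q_\ell$ only; nothing prevents the integral operator from being divisible by $\ell$ (integral Hard Lefschetz is not a theorem, and can fail), in which case your set $U$ is killed by $\ell^{r-1}$ and yields only a congruence modulo a smaller power of $\ell$. The unimodularity of Poincar\'e duality does not rescue this: as you note, it merely translates ``$\cup\,h^{d-j}$ is divisible by $\ell$'' into ``the Lefschetz form is divisible by $\ell$,'' and the latter is just as possible. So the argument, run with the geometric operator, does not terminate.

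The paper's missing idea is precisely Lemma \ref{balanced}: use Hard Lefschetz, together with the Galois-invariance of the Chern class $h$ of a $K$-rational ample sheaf, only to conclude that there exists \emph{some} isomorphism of $\Q_{\ell}[\Gu K]$-modules between $H^j(\bar{X},\Q_\ell(a))$ and the twist $H^{2d-j}(\bar{X},\Q_\ell(d-a))(\chi_{\ell}^{2a-j})$, and then discard the geometric map entirely: any such isomorphism $u_0$ can be rescaled by a power of $\ell$ so that the rescaled $u$ carries the lattice $T_1=H^j(\bar{X},\Z_\ell(a))/\tors$ into $T_2=H^{2d-j}(\bar{X},\Z_\ell(d-a))/\tors$ but not into $\ell T_2$. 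Rescaling does not disturb equivariance, so ``survival mod $\ell$'' now holds \emph{by construction} rather than by an unavailable integral Lefschetz theorem: the image $u(x_0)$ of a suitable lattice vector is not divisible by $\ell$, hence generates a direct summand of $T_2$, is Galois-invariant modulo $\ell^r$, and the unimodular \emph{integral} pairing of \cite{ZarhinP} (this, not the classical finite-coefficient perfectness you cite, is where that reference is really needed) produces $x$ with $\tilde{e}(x,u(x_0))=1$, whence $\chi_\ell^{2a-j}(g)\equiv\tilde{e}(gx,g\,u(x_0))\equiv\tilde{e}(x,u(x_0))=1 \pmod{\ell^r}$ --- exactly your endgame, packaged as Theorem \ref{mainLinear2}. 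Note also that the paper transfers the triviality hypothesis to the lattice side via the universal-coefficient embedding $H^j(\bar{X},\Z_\ell(a))/\ell^r\hookrightarrow H^j(\bar{X},\mu_{\ell^r}^{\otimes a})$, so that the whole argument takes place on $\Z_\ell$-lattices, where this rescaling trick is available.
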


\begin{cor}[Corollary to Theorem \ref{nGeneral}]
\label{converse}
Let $K$ be a field, $n$ a positive integer prime to  $\fchar(K)$. Suppose that $K$ does not contain a primitive $n$th root of unity.  
Suppose that $j$ is an odd positive integer.  Let $a$ be an integer such that 
 $2a-j$ is relatively prime to 
$\phi(n)$. 
 Then for each 
absolutely irreducible smooth projective  variety $X$ over $K$ with $\bet_j(\bar{X})\ne 0$ the Galois group $\Gu K$ acts nontrivially on
$H^j(\bar{X},\mu_n^{\otimes a})$ 
\end{cor}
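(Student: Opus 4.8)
The plan is to deduce this corollary from Theorem \ref{nGeneral} purely by contraposition, so that no new geometric input is required. The first thing I would record is the elementary but essential translation of vocabulary: the statement ``$K$ does not contain a primitive $n$th root of unity'' is exactly the statement $\mu_n \not\subset K$. Indeed, a primitive $n$th root of unity generates the cyclic group $\mu_n$, so $K$ contains such an element precisely when $\mu_n \subset K$. Thus the hypothesis of the corollary is literally the negation of the conclusion of Theorem \ref{nGeneral}.

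With this in hand, I would argue by contradiction. Assume that $\Gu K$ acts trivially on $H^j(\bar{X},\mu_n^{\otimes a})$. Since $X$ is absolutely irreducible, smooth and projective with $\bet_j(\bar{X})\ne 0$, and since by hypothesis $2a-j$ is relatively prime to $\phi(n)$, the second assertion of Theorem \ref{nGeneral} applies verbatim and forces $\mu_n\subset K$. This contradicts the standing assumption that $K$ has no primitive $n$th root of unity. Therefore the action must be nontrivial, which is precisely what is claimed.

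I would also explain why the hypothesis ``$j$ odd'' is natural rather than an additional constraint: since $\mu_2=\{\pm 1\}\subset K$ always holds, the assumption $\mu_n\not\subset K$ forces $n\ge 3$, whence $\phi(n)$ is even; coprimality of $2a-j$ with the even number $\phi(n)$ then forces $2a-j$ to be odd, and as $2a$ is even this makes $j$ odd automatically. So the oddness of $j$ is a consequence of the other hypotheses, and I would note this only for clarity. I do not expect any genuine obstacle here: all the substantive work, namely relating triviality of the Galois module to the vanishing of $\bar{\chi}_n^{2a-j}$ and then, under the coprimality condition, to the inclusion $\mu_n\subset K$, is already accomplished in Theorem \ref{nGeneral}. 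The only points needing care are the routine translation between the ``primitive root'' phrasing and the inclusion $\mu_n\subset K$, and checking that the coprimality hypothesis matches exactly the hypothesis of the second clause of Theorem \ref{nGeneral}.
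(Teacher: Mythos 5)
Your proof is correct and is essentially the paper's own argument: the paper treats this statement as an immediate corollary of Theorem \ref{nGeneral} by exactly the contraposition you spell out. Your closing observation that the oddness of $j$ follows from the other hypotheses also matches the paper's remark on this point (stated there in contrapositive form: $j$ even would force $n=2$, contradicting $\mu_2\subset K$).
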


The next assertion covers (in particular) the case of {\sl quadratic}  $\bar{\chi}_n$ (e.g., when $K$ is the maximal real subfield $\Q(\mu_n)^{+}$ of the $n$th cyclotomic field $\Q(\mu_n)$ of $\Q$.)

\begin{thm}
\label{converse2}
Let $K$ be a field, $n$ a positive integer prime to   $\fchar(K)$.
Suppose that  the degree 
$[K(\mu_n):K]$ is even. (E.g.,
 $K(\mu_n)/K$ is a quadratic extension.)   Then for each positive odd integer $j$, each integer $a$ and every
absolutely irreducible smooth projective  variety $X$ over $K$ with $\bet_j(\bar{X})\ne 0$ the Galois group $\Gu K$ acts nontrivially on
$H^j(\bar{X},\mu_n^{\otimes a})$.
\end{thm}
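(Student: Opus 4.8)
The plan is to argue by contradiction, reducing everything to Theorem~\ref{nGeneral} together with the parity observation recorded in Remark~\ref{even}. Suppose, contrary to the claim, that for some odd positive integer $j$, some integer $a$, and some absolutely irreducible smooth projective variety $X$ over $K$ with $\bet_j(\bar X)\ne 0$, the Galois group $\Gu K$ acts trivially on $H^j(\bar X,\mu_n^{\otimes a})$. Then all the hypotheses of Theorem~\ref{nGeneral} hold for this $j$, $a$, and $X$.

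First I would apply Theorem~\ref{nGeneral} to conclude that the cyclotomic character $\bar{\chi}_n^{2a-j}$ is trivial, that is, $\bar{\chi}_n^{2a-j}(g)=1$ for all $g\in\Gu K$. Here I use only the first (weaker) conclusion of that theorem; unlike in Corollary~\ref{converse}, I do not assume that $2a-j$ is prime to $\phi(n)$, so the stronger conclusion $\mu_n\subset K$ is neither available nor needed.

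Next comes the parity bookkeeping, which is the real content of the deduction. Because $j$ is odd and $2a$ is even, the integer $2a-j$ is odd. On the other hand, the hypothesis that $[K(\mu_n):K]$ is even forces, via Remark~\ref{even}, the exponent $\exp(n,K)$ of $\Gal(K(\mu_n)/K)$ to be even, and Remark~\ref{even} then guarantees that every odd power of $\bar{\chi}_n$ is nontrivial. Hence $\bar{\chi}_n^{2a-j}$ is nontrivial, contradicting the triviality established in the previous step. This contradiction shows that the Galois action on $H^j(\bar X,\mu_n^{\otimes a})$ cannot be trivial, which is exactly the assertion.

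The hard part is not in this argument at all: it has already been absorbed into Theorem~\ref{nGeneral}, whose proof relies on the Hard Lefschetz Theorem and the unimodularity of Poincar\'e duality. Granting that input, the present theorem is a short parity argument, and the only subtlety worth emphasizing is the logical role of the even-degree hypothesis. It lets us replace the coprimality condition that $2a-j$ be prime to $\phi(n)$ (the hypothesis of Corollary~\ref{converse}) by a parity condition, precisely because for mere nontriviality we need only that some odd power of $\bar{\chi}_n$ remain nontrivial, rather than the full statement $\mu_n\subset K$.
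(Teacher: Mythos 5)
Your proof is correct and follows essentially the same route as the paper: a contradiction argument applying the first conclusion of Theorem~\ref{nGeneral} to get triviality of $\bar{\chi}_{n}^{2a-j}$, then invoking Remark~\ref{even} with the parity observation that $2a-j$ is odd while $[K(\mu_n):K]$ is even. The paper's proof is word-for-word this argument, so there is nothing to add.
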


\begin{rem}
The special case of Theorem \ref{converse2} when $\bar{\chi}_n$ is a quadratic character follows directly from Theorem \ref{mainRoot}, because in this case  the Galois module $H^j(\bar{X},\mu_n^{\otimes a})$  is isomorphic either to $H^j(\bar{X},\mu_n^{\otimes [(j+1)/2]})$  or to $H^j(\bar{X},\mu_n^{\otimes [(j-1)/2]})$.
\end{rem}

The paper is organized as follows. Section \ref{algebra} contains auxiliary results about pairings between finitely generated modules over discrete valuation rings. We use them in Section \ref{proofs}, in order to prove Theorems \ref{nGeneral}, \ref{mainRoot} and \ref{converse2}.

{\bf Acknowledgements}.
This work is a follow up of \cite{ZarhinIzv1982,ZarhinCamb99,ZarhinP}.  I am deeply grateful to Alexey Parshin for helpful comments. My thanks also go to
Peter Schneider, Alice Silverberg, Adebisi Agboola, 
 Alexei Skorobogatov and Nick Katz for their interest in this paper (and/or the previous ones).  My very special thanks go to the referee, whose numerous suggestions and comments significantly improved the exposition.
 The final version of this paper was prepared during my stay at the Max-Planck-Institut f\"ur Mathematik (Bonn), whose hospitality and support are gratefully acknowledged.

\section{Linear algebra}
\label{algebra}

This section contains auxiliary results that will be used in the next section in order to prove main results of the paper.

\begin{sect}
\label{general}
Let $E$ be a  discrete valuation field, $\Lambda \subset E$
the corresponding discrete valuation ring with maximal ideal $\mm$.
Let $\pi \in \mm$ be an uniformizer, i.e., $\mm=\pi\Lambda$.

If $U$ is a finitely generated $\Lambda$-module then we write $U_E$ for the corresponding
finite-dimensional  $E$-vector space $U\otimes_{\Lambda}E$. The kernel of the homomorphism of $\Lambda$-modules
$$\otimes 1: U \to U\otimes_{\Lambda}E=U_E, \ x \mapsto x\otimes 1$$
coincides with $U_{\tors}$ while the image 
$$\tilde{U}:=\otimes 1(U)\subset U_E$$
is a $\Lambda$-lattice in $V_E$ of maximal rank $\dim_E(U_E)$.

Let $G$ be a  group and
$$\chi: G \to \Lambda^{*}\subset E^{*}$$
is a homomorphism  of $G$ to the group $\Lambda^{*}$ of invertible elements of $\Lambda$. If 
$H$ is a nonzero finite-dimensional vector space over $E$ and
$$\rho: G \to \Aut_E(H)$$
is a $E$-linear representation of $G$ in $H$ then $H$ becomes a module over the group algebra
 $E[G]$ of $G$ over $E$. Then
$$\rho\otimes \chi: G \to \Aut_E(H),   \ \rho\otimes \chi(g)=\chi(g)\rho(g) \ \forall g \in G$$
is also a linear representation  of $G$ in $H$. We denote the corresponding $E[G]$-module by $H(\chi)$ and call it the {\sl twist}
of $H$ by $\chi$. Notice that $H$ and  $H(\chi)$ coincide as $E$-vector spaces. It is also clear that if $T$ is a $\Lambda$-lattice in $H$ then
it is $G$-stable in $H(\chi)$ if and only if it is $G$-stable in  the $E[G]$-module $H$. On the other hand, let $L$ be a {\sl one-dimensional} $E$-vector space provided with a structure of $G$-module defined by
$$g z:= \chi(g)z \ \forall g \in G, z \in L.$$
Then the $G$-modules $H(\chi)$ and $H\otimes_E L$ are isomorphic noncanonically.
\end{sect}

\begin{lem}
\label{balanced}
Suppose that $H_1$ and $H_2$  are nonzero finite-dimensional $E$-vector spaces and
$$\rho_1: G \to \Aut_E(H_1), \  \rho_2: G \to \Aut_E(H_2)$$
are  
isomorphic
 $E$-linear representations of $G$.  
Suppose that $T_1$ is a $G$-stable $\Lambda$-lattice in $H_1$ of rank $\dim_E(H_1)$ and
$T_2$ is a $G$-stable $\Lambda$-lattice in $H_2$ of rank $\dim_E(H_2)$. Then there is an isomorphism of $E[G]$-modules
$u:  H_1 \to H_2$
such that
$$u(T_1)\subset T_2, \ u(T_1) \not\subset \pi\cdot T_2.$$
\end{lem}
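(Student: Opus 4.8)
The plan is to start from an arbitrary $E[G]$-isomorphism and to correct it by a scalar. Since $\rho_1$ and $\rho_2$ are isomorphic $E$-linear representations, there is an isomorphism of $E[G]$-modules $\phi\colon H_1 \to H_2$. Its image $\phi(T_1)$ is again a $G$-stable $\Lambda$-lattice of full rank $\dim_E(H_2)$ in $H_2$, because $\phi$ is $\Lambda$-linear and $G$-equivariant. The key observation is that multiplication by any scalar $c \in E^{*}$ commutes with the $G$-action (the representations are $E$-linear), so $c\phi$ is still an isomorphism of $E[G]$-modules; hence it suffices to choose $c$ so that $c\phi(T_1) \subset T_2$ but $c\phi(T_1)\not\subset \pi\cdot T_2$, and then set $u = c\phi$.

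To find such a $c$, I would consider the set
\[
S = \{\, k \in \Z : \phi(T_1) \subset \pi^{k} T_2 \,\}.
\]
First I would check that $S$ is nonempty: as $\phi(T_1)$ is finitely generated over $\Lambda$ and $T_2$ spans $H_2$ over $E$, each generator of $\phi(T_1)$ lies in $\pi^{-N}T_2$ for some $N$, so $\phi(T_1)\subset \pi^{-N}T_2$ for $N$ large enough. Next I would check that $S$ is bounded above: by the $\pi$-adic separatedness of the finitely generated $\Lambda$-module $T_2$ (Krull intersection) one has $\bigcap_{k}\pi^{k}T_2 = \{0\}$, while $\phi(T_1)\neq \{0\}$ because $H_1\neq\{0\}$ and $T_1$ has full rank; thus no nonzero vector of $\phi(T_1)$ lies in all $\pi^{k}T_2$, which bounds $S$ from above. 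Since $\pi^{k+1}T_2 \subset \pi^{k}T_2$, the set $S$ is downward closed, hence of the form $\{k\in\Z : k \le k_0\}$ for a well-defined maximum $k_0 = \max S$.

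By the very definition of $k_0$ we have $\phi(T_1)\subset \pi^{k_0}T_2$ and $\phi(T_1)\not\subset \pi^{k_0+1}T_2$. Setting $u = \pi^{-k_0}\phi$, which is an $E[G]$-isomorphism by the scalar observation above, gives $u(T_1) = \pi^{-k_0}\phi(T_1) \subset T_2$ and $u(T_1) \not\subset \pi\cdot T_2$, as required.

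I expect the only real point to verify is the existence of the maximum $k_0$, i.e.\ that $S$ is nonempty and bounded above; both reduce to the comparability of two full-rank lattices and to the separatedness $\bigcap_k \pi^k T_2 = \{0\}$. Equivalently, one could invoke the elementary-divisor theorem for the pair of lattices $\phi(T_1)$ and $T_2$ in $H_2$, which exhibits $\Lambda$-bases differing by powers of $\pi$; the smallest such power is exactly $-k_0$, confirming that a single scalar correction suffices and that no further adjustment of $\phi$ is needed.
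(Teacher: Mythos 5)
Your proof is correct and follows essentially the same route as the paper: start from an arbitrary $E[G]$-isomorphism, observe that scalar multiples remain $G$-equivariant, and rescale by the extremal power of $\pi$ (using $H_2=\bigcup_j \pi^{-j}T_2$ and $\bigcap_j \pi^j T_2=\{0\}$) so that the image lands in $T_2$ but not in $\pi T_2$. Your write-up is in fact slightly more careful than the paper's about which extremal exponent is being chosen and why it exists.
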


\begin{proof}
Clearly,
$$H_2 =\bigcup_{j=1}^{\infty} \pi^{-j}\cdot T_2, \   \bigcap _{j=1}^{\infty} \pi^{j}\cdot T_2=\{0\}.$$
Let $u_0: H_1 \cong H_2$ be an isomorphism of $E[G]$-modules. Since $H_1$ is a finitely generated $\Lambda$-module, there exists an integer $j$ such that
$\pi^{-j}\cdot u_0(T_1)\subset T_2$. Let us take the smallest $j$ that enjoys this property and put $u=\pi^{-j}u_0$.
\end{proof}

\begin{thm}
\label{mainLinear}
Suppose that $U$ and $V$ are finitely generated $\Lambda$-modules provided with  group homomorphisms
$$G \to \Aut_{\Lambda}(U), \  G \to \Aut_{\Lambda}(V).$$
Let us assume that $ U/\tors \ne \{0\}$,  i.e., rank of $U$ is positive.

Suppose that we are given a $\Lambda$-bilinear pairing
$$e: U \times V \to \Lambda$$ that enjoys the following properties.

\begin{itemize}
\item[(i)]
$$e(gx,gy)=\chi(g)\cdot e(x,y) \ \forall g\in G; x\in U, y\in V.$$
\item[(ii)]
The $\Lambda$-bilinear pairing
$$U/\tors \times V/\tors \to \Lambda$$
induced by $e$ is {\sl perfect} (unimodular).
\item[(iii)]
The $E[G]$-modules $U_E$ and $V_E$ are isomorphic.
\end{itemize}

Let  $r$ be a positive integer such that the induced $G$-action on $U/\pi^r U$ is trivial, i.e.,
$$x-gx \in \pi^r U \ \forall g\in G, x\in U.$$

Then
$$\chi(g) \bmod \pi^r\Lambda=1 \in \Lambda/\pi^r\Lambda \ \forall g \in G.$$
\end{thm}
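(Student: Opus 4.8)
The plan is to reduce everything to the torsion-free lattices living inside the $E$-vector spaces $U_E$ and $V_E$, then manufacture from the given pairing a \emph{self}-pairing on the lattice attached to $U$ that attains a \emph{unit} value, and finally read off $\chi(g)\bmod\pi^r$ from the triviality of the action modulo $\pi^r$. First I would dispose of torsion: since $\Lambda$ is a domain, any torsion element pairs to $0$ under $e$, so $e$ factors through $U/\tors\times V/\tors$; identifying $U/\tors=\tilde U\subset U_E$ and $V/\tors=\tilde V\subset V_E$ (notation of \ref{general}), property (ii) says the induced pairing $\tilde U\times\tilde V\to\Lambda$ is perfect, property (i) persists on $\tilde U\times\tilde V$, and the hypothesis $x-gx\in\pi^r U$ descends to $\tilde x-g\tilde x\in\pi^r\tilde U$ for all $\tilde x\in\tilde U$. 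Thus I may work throughout with the full-rank $G$-stable lattices $\tilde U$ and $\tilde V$.

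Next I would invoke Lemma \ref{balanced}, applied to $H_1=U_E$, $H_2=V_E$ with the $G$-stable lattices $T_1=\tilde U$, $T_2=\tilde V$ (both of full rank, and $U_E\cong V_E$ as $E[G]$-modules by (iii), with $U_E\ne\{0\}$ since $\rank U>0$). This yields an isomorphism of $E[G]$-modules $u\colon U_E\to V_E$ with
\[
u(\tilde U)\subset\tilde V,\qquad u(\tilde U)\not\subset\pi\cdot\tilde V .
\]
Using $u$ I define a $\Lambda$-bilinear \emph{self}-pairing on $\tilde U$ by $b(x,x'):=e\bigl(x,u(x')\bigr)$, which takes values in $\Lambda$ because $u(x')\in u(\tilde U)\subset\tilde V$. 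Since $u$ is $G$-equivariant, property (i) gives the transformation law
\[
b(gx,gx')=e\bigl(gx,\,g\,u(x')\bigr)=\chi(g)\,e\bigl(x,u(x')\bigr)=\chi(g)\,b(x,x')\qquad\forall g\in G .
\]

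The crucial point is to locate a unit value of $b$. Because $e$ is perfect, its reduction modulo $\mm=\pi\Lambda$ is a perfect pairing $\tilde U/\pi\tilde U\times\tilde V/\pi\tilde V\to\Lambda/\pi\Lambda$ over the residue field. The condition $u(\tilde U)\not\subset\pi\tilde V$ furnishes $x_0'\in\tilde U$ with $u(x_0')\notin\pi\tilde V$, i.e.\ a nonzero class in $\tilde V/\pi\tilde V$; nondegeneracy of the reduced pairing then produces $x\in\tilde U$ with $e\bigl(x,u(x_0')\bigr)$ a unit in $\Lambda$. Hence $b(x,x_0')\in\Lambda^{*}$.

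Finally I would conclude by a direct expansion. Fix $g\in G$; by the descended triviality hypothesis, $gx-x\in\pi^r\tilde U$ and $gx_0'-x_0'\in\pi^r\tilde U$. Expanding $b$ bilinearly,
\[
b(gx,gx_0')=b(x,x_0')+b(gx-x,x_0')+b(x,gx_0'-x_0')+b(gx-x,gx_0'-x_0'),
\]
and each of the last three terms lies in $\pi^r\Lambda$, so $b(gx,gx_0')\equiv b(x,x_0')\pmod{\pi^r\Lambda}$. Combining with the transformation law gives $\bigl(\chi(g)-1\bigr)\,b(x,x_0')\in\pi^r\Lambda$; since $b(x,x_0')\in\Lambda^{*}$, I may cancel it to obtain $\chi(g)\equiv 1\pmod{\pi^r\Lambda}$, as desired. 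The step I expect to be the real heart of the argument is the construction of the unit-valued self-pairing: one must blend the purely abstract $E[G]$-isomorphism (iii) with the integral structure \emph{via} the sharp ``$\not\subset\pi\tilde V$'' clause of Lemma \ref{balanced}. It is precisely this that isolates a single unit entry and yields $\chi(g)\equiv1$; a cruder invariant such as the determinant would only give $\chi(g)^{\,n}\equiv1\pmod{\pi^r}$ with $n=\dim_E U_E$, which is weaker than what is claimed.
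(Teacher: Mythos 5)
Your proof is correct and follows essentially the same route as the paper: reduce to the full-rank lattices $\tilde U$, $\tilde V$, apply Lemma \ref{balanced} to get $u$ with $u(\tilde U)\subset\tilde V$ but $u(\tilde U)\not\subset\pi\tilde V$, use perfectness to extract a unit pairing value against an element not divisible by $\pi$, and conclude by bilinear expansion modulo $\pi^r$. Your self-pairing $b(x,x')=e(x,u(x'))$ is only a notational repackaging of the paper's $\tilde e(x,u(x'))$, and obtaining a unit (rather than normalizing to $1$ as the paper does) makes no essential difference.
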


\begin{proof}
Clearly,
$$e(U_{\tors},V) =\{0\}=e(U,V_{\tors}).$$
Also  $U_{\tors}$ is a $G$-submodule of $U$ and $V_{\tors}$ is a $G$-submodule of $V$. Moreover, the $G$-module  $[U/{\tors}]/\pi^r [U/{\tors}]$
is isomorphic to a quotient of the $G$-module $U/\pi^r U$. In particular, the $G$-action on  $[U/{\tors}]/[\pi^r U/{\tors}]$ is also trivial. In the notation of Sect. \ref{general},
the natural homomorphisms
$$U/{\tors}=U/U_{\tors} \to \tilde{U},  \ x+U_{\tors} \mapsto x\otimes 1,  \ V/{\tors}=V/V_{\tors} \to \tilde{V},  \ x+V_{\tors} \mapsto x\otimes 1$$
are $G$-equivariant isomorphisms of free $\Lambda$-modules of finite rank
$$U/{\tors} \cong \tilde{U}, \ V/{\tors} \cong \tilde{V}$$
where $\tilde{U}$  and $\tilde{V}$ are $G$-stable lattices of maximal rank in $U_E$ and $V_E$ respectively. This implies that  the $G$-action on 
$\tilde{U}/\pi^r \tilde{U}$ and $e$ induces a $\Lambda$-bilinear {\sl perfect} pairing
$$\tilde{e}: \tilde{U} \times \tilde{V}\to \Lambda$$
such that
$$\tilde{e}(gx,gy)=\chi(g)\cdot \tilde{e}(x,y) \ \forall g\in G; x\in  \tilde{U}, y\in \tilde{V}.$$

Applying Lemma \ref{balanced} to the isomorphic $E[G]$-modules $U_E$ and $V_E$,
we obtain a ``nicer" isomorphism of $E[G]$-modules $u: U_E \cong V_E$ such that
$$u(T_1)\subset T_2, \ u(T_1) \not\subset \pi T_2.$$
Let us pick $x_0 \in T_1$ with $y:=u(x_0)\not\in \pi T_2$. Since $x_0\bmod \pi^r T_1 \in T_1/ \pi^r T_1$ is $G$-invariant,
its image 
$$u(x)\bmod \pi^r T_2=  y\bmod \pi^r T_2 \in T_2/\pi^r T_2$$
 is also $G$-invariant.  Since $y$ is {\sl not} divisible in $T_2$,
the $\Lambda$-submodule $\Lambda \cdot y$ is a direct summand of $T_2$.
Since the pairing $\tilde{e}$ between $T_1$ and $T_2$ is perfect, there is $x \in T_1$ with $e(x,y)=1$. This implies that
$$\chi(g)=\chi(g)\cdot 1=\chi(g)\cdot \tilde{e}(x,y)=\tilde{e}(gx,gy),$$
i.e.,  
$$\chi(g)=\tilde{e}(gx,gy) \ \forall g \in G.$$
On the other hand, since 
$$x -gx \in \pi^r T_1, \  y -gy \in \pi^r T_2,$$
we have
$$\tilde{e}(gx,gy)-\tilde{e}(x,y) \in \pi^r \Lambda \ \forall g \in G.$$
This means that
$$\chi(g)-1=\tilde{e}(gx,gy)-\tilde{e}(x,y)  \in \pi^r \Lambda \  \forall g \in G$$
and we are done.
\end{proof}

The next statement is a useful variant of Theorem \ref{mainLinear} that deals with {\sl twisted} representations.

\begin{thm}
\label{mainLinear2}
Suppose that $U$ and $V$ are finitely generated $\Lambda$-modules provided with  group homomorphisms
$$G \to \Aut_{\Lambda}(U), \  G \to \Aut_{\Lambda}(V).$$
Assume that $ U/\tors \ne \{0\}$,  i.e. the  rank of $U$ is positive.

Suppose that we have a $\Lambda$-bilinear pairing
$$e: U \times V \to \Lambda$$ that enjoys the following properties.

\begin{itemize}
\item[(i)]
$$e(gx,gy)= e(x,y) \ \forall g\in G; x\in U, y\in V.$$
\item[(ii)]
The $\Lambda$-bilinear pairing
$$U/\tors \times V/\tors \to \Lambda$$
induced by $e$ is {\sl perfect} (unimodular).
\item[(iii)]
The $E[G]$-modules $U_E$ and $V_E(\chi)$ are isomorphic.
\end{itemize}

Let  $r$ be a positive integer  such that the induced $G$-action on $U/\pi^r U$ is trivial, i.e. 
$$x-gx \in \pi^r U \ \forall g\in G, x\in U.$$

Then
$$\chi(g) \bmod \pi^r\Lambda=1 \in \Lambda/\pi^r\Lambda \ \forall g \in G.$$
\end{thm}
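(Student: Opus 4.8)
The plan is to reduce Theorem~\ref{mainLinear2} to Theorem~\ref{mainLinear} by absorbing the character $\chi$ into a modified pairing, so that almost all of the work is already done. The only structural difference between the two statements is in hypotheses (i) and (iii): here the pairing $e$ is genuinely $G$-invariant, whereas the identification of $U_E$ with $V_E$ has been twisted by $\chi$. First I would recall from Sect.~\ref{general} that the twisted module $V_E(\chi)$ coincides with $V_E$ as an $E$-vector space, and that a $\Lambda$-lattice is $G$-stable in $V_E(\chi)$ exactly when it is $G$-stable in $V_E$; in particular $\tilde V$ remains a $G$-stable lattice of maximal rank when we regard it inside $V_E(\chi)$.

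The key step is to view the very same pairing $e$, and its induced perfect pairing $\tilde e$ on $\tilde U \times \tilde V$, as a pairing $U \times V(\chi) \to \Lambda$ and check how it transforms under the twisted action. Writing $\star$ for the action on $V(\chi)$, i.e.\ $g \star y = \chi(g)\, gy$, I would compute
$$
\tilde e(gx, g\star y) = \tilde e(gx, \chi(g)\, gy) = \chi(g)\, \tilde e(gx,gy) = \chi(g)\, \tilde e(x,y),
$$
where the middle equality is $\Lambda$-bilinearity and the last is the honest $G$-invariance (i) of $e$. Thus with respect to the action on $U$ and the twisted action on $V(\chi)$, the pairing $\tilde e$ satisfies precisely the transformation law (i) of Theorem~\ref{mainLinear} with the character $\chi$. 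The perfectness hypothesis (ii) is untouched, since $e$ and its induced pairing on torsion-free quotients are literally the same objects; and hypothesis (iii) of Theorem~\ref{mainLinear}, namely that $U_E$ and $V_E(\chi)$ be isomorphic as $E[G]$-modules, is exactly hypothesis (iii) here.

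Having matched all three hypotheses, I would apply Theorem~\ref{mainLinear} with $V$ replaced by the module $V$ carrying the twisted $G$-action of $V(\chi)$. The hypothesis on $r$ concerns only the $G$-action on $U/\pi^r U$, which is identical in both theorems, so it transfers verbatim. The conclusion of Theorem~\ref{mainLinear} is then precisely $\chi(g) \bmod \pi^r \Lambda = 1$ for all $g \in G$, which is what we want. I expect the main (and essentially only) obstacle to be the bookkeeping around the twist: one must confirm that passing from $V$ to $V(\chi)$ does not disturb the lattice $\tilde V$, the perfectness of the induced pairing, or the triviality hypothesis on $U/\pi^r U$, so that Theorem~\ref{mainLinear} genuinely applies to the twisted data. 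Once that is verified, no new computation is needed.
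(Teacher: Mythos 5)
Your proposal is correct and is essentially identical to the paper's own proof: the paper also forms the twisted module $V(\chi)$ (same underlying $\Lambda$-module, action $g\mapsto \chi(g)\rho_V(g)$), observes that the unchanged pairing $e$ then satisfies the transformation law $e(gx,g\star y)=\chi(g)e(x,y)$, notes $V(\chi)_E\cong V_E(\chi)$ so hypothesis (iii) matches, and invokes Theorem~\ref{mainLinear}. The only cosmetic difference is that you verify the transformation law on the lattices $\tilde U\times\tilde V$ while the paper does so directly on $U\times V(\chi)$; the computation is the same.
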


\begin{proof}
Let 
$$\rho_U: G \to \Aut_{\Lambda}(U), \ \rho_V: G \to \Aut_{\Lambda}(V)$$ be the structure homomorphisms that define the actions of $G$ on $U$ and  $V$ respectively.  In this notation,
$$e(\rho_U(g)x,\rho_V(g)y)= e(x,y) \ \forall g\in G; x\in U, y\in V.$$
Let us twist $\rho_V$  by considering the group homomorphism
$$\rho_{V(\chi)}:G \to \Aut_{\Lambda}(V), \ g \mapsto \chi(g)\rho(g).$$
We denote the resulting $G$-module by $V(\chi)$ and call it the {\sl twist} of $V$ by $\chi$. Notice that $V$ coincides with $V(\chi)$ as $\Lambda$-module. On the other
hand, the $E[G]$-module $V(\chi)_E$ is canonically isomorphic to $V_E(\chi)$. The pairing $e$ defines the $\Lambda$-bilinear pairing
$$e_{\chi}: U \times V(\chi) \to \Lambda, \ e_{\chi}(x,y):=e(x,y) \ \forall x\in U, y\in V=V(\chi)$$
of $G$-modules $U$ and $V(\chi)$,
which satisfies
$$e_{\chi}(\rho_U(g)x, \rho_{V(\chi)}(g)y)=
e(\rho_U(g)x,\chi(g)\rho_V(g)y)=
\chi(g) e(\rho_U(g)x,\rho_V(g)y)=$$
$$\chi(g) e(x,y)=
\chi(g)e_{\chi}(x,y) \ \forall g \in G; x \in U, y \in V(\chi).$$
This implies that
$$e_{\chi}(\rho_U(g)x, \rho_{V(\chi)}(g)y)=\chi(g)e_{\chi}(x,y) \ \forall g \in G; x \in U, y \in V(\chi).$$
Now the result follows from Theorem \ref{mainLinear} applied to $U$, $V(\chi)$ and $e_{\chi}$.
\end{proof}

\section{Proofs of main results}
\label{proofs}
Let $\ell$ be a prime different from $\fchar(K)$ and $r$ a positive integer.
 Let us put $$E=\Q_{\ell}, \Lambda=\Z_{\ell},
\pi=\ell,  G=\Gu K.$$   We keep the notation and assumptions of Sect. \ref{basicX}.
Recall that $d=\dim(X)\ge 1$.

\begin{prop}
\label{ajGeneral}
Let $j$ be a nonnegative integer with $j \le 2d$ and $\bet_j(\bar{X})\ne 0$. Let $a$ be an integer.  Assume that the Galois action on $H^j(\bar{X},{\mu_{\ell^r}}^{\otimes a})$ is trivial.  Then
$$\bar{\chi}_{\ell^r}^{2a-j}(g)
=1 \ \forall g \in G=\Gu K.$$
\end{prop}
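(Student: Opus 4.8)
The plan is to deduce Proposition \ref{ajGeneral} from the linear-algebra machinery of Theorem \ref{mainLinear2} by choosing the right modules $U$, $V$, pairing $e$, and twisting character $\chi$. The natural candidate for the pairing is \emph{Poincar\'e duality} in $\ell$-adic \'etale cohomology, which provides a perfect Galois-equivariant pairing between $H^j(\bar X,\Z_\ell)$ and $H^{2d-j}(\bar X,\Z_\ell)$ landing in $H^{2d}(\bar X,\Z_\ell)\cong\Z_\ell(-d)$. The unimodularity on the torsion-free quotients is exactly the input cited from \cite{ZarhinP}, which matches hypothesis (ii) of Theorem \ref{mainLinear2}. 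Thus I would set $U=H^j(\bar X,\Z_\ell)(a)$ and let $V$ be an appropriately twisted version of $H^{2d-j}(\bar X,\Z_\ell)$, with $\Lambda=\Z_\ell$, $\pi=\ell$, and $G=\Gu K$, so that the triviality hypothesis on $H^j(\bar X,\mu_{\ell^r}^{\otimes a})=U/\ell^r U$ becomes precisely the hypothesis on the $G$-action on $U/\pi^r U$.

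The key obstacle, and the crux of the whole argument, is hypothesis (iii): I must arrange that $U_E$ and $V_E(\chi)$ are \emph{isomorphic} as $E[G]$-modules, where $\chi$ is the character whose mod-$\pi^r$ triviality I want to extract. Here is where the \textbf{Hard Lefschetz Theorem} \cite{DeligneW2} enters: it supplies, via the $(d-j)$-fold cup product with the hyperplane class (a Tate twist of the Lefschetz operator), a $G$-equivariant isomorphism $H^j(\bar X,\Q_\ell)\cong H^{2d-j}(\bar X,\Q_\ell)(d-j)$ for $j\le d$. Combining Hard Lefschetz with Poincar\'e duality produces the self-duality of the primitive part, and the upshot is a Galois-equivariant nondegenerate pairing on $H^j$ itself valued in $\Q_\ell(j-2d)$ or, after the twist by $a$, valued in $\Q_\ell$ twisted by the correct power of the cyclotomic character. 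I would compute the Tate twists carefully: twisting $H^j$ by $a$ shifts the weight, and after pairing the total twist on the target is governed by $\chi_\ell^{2a-j}$, whose reduction mod $\ell^r$ is $\bar\chi_{\ell^r}^{2a-j}$ by the compatibility $\chi_\ell\bmod\ell^r=\bar\chi_{\ell^r}$ recorded in Sect. \ref{basicX}.

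With these identifications in place I would check the remaining hypotheses of Theorem \ref{mainLinear2} mechanically: property (i) (the genuine $G$-invariance of the paired form once the twist has been absorbed into $\chi$) follows from the Galois-equivariance of Poincar\'e duality together with the Hard Lefschetz isomorphism being $G$-equivariant; property (ii) is the unimodularity from \cite{ZarhinP}; and the rank positivity $U/\tors\ne\{0\}$ is guaranteed by the assumption $\bet_j(\bar X)\ne 0$, since $\dim_E U_E=\bet_j(\bar X)$. The conclusion of Theorem \ref{mainLinear2} then reads $\chi(g)\equiv 1\pmod{\pi^r\Lambda}$ for all $g$, which after translating through $\chi=\chi_\ell^{2a-j}$ and reducing mod $\ell^r$ yields exactly $\bar\chi_{\ell^r}^{2a-j}(g)=1$ for all $g\in\Gu K$.

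I expect the main difficulty to be purely bookkeeping of the Tate twists: getting the exponent of the cyclotomic character to come out as $2a-j$ rather than some neighboring value requires tracking the twist $a$ on both factors of the pairing and the intrinsic twist $-d$ coming from $H^{2d}(\bar X,\Z_\ell)\cong\Z_\ell(-d)$ together with the $d-j$ from Hard Lefschetz; these must cancel to leave $2a-j$. A secondary subtlety is ensuring that Hard Lefschetz, which is naturally a statement over $\Q_\ell$, is only used at the level of $E[G]$-modules $U_E\cong V_E(\chi)$ (hypothesis (iii)), so that the integral unimodularity needed for (ii) can be imported separately from Poincar\'e duality over $\Z_\ell$ without demanding integrality of the Lefschetz isomorphism.
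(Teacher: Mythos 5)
Your proposal is correct and follows essentially the same route as the paper's proof: take $U = H^j(\bar{X},\Z_\ell(a))$ and $V = H^{2d-j}(\bar{X},\Z_\ell(d-a))$ so that Poincar\'e duality gives a Galois-invariant pairing into $H^{2d}(\bar{X},\Z_\ell(d))\cong\Z_\ell$ whose unimodularity (hypothesis (ii)) is the cited result of \cite{ZarhinP}, use the Hard Lefschetz isomorphism given by cup product with powers of the Galois-invariant Chern class $h$ of an ample sheaf to verify hypothesis (iii) with $\chi=\chi_\ell^{2a-j}$, and invoke Theorem \ref{mainLinear2}. Two minor points where the paper is more careful: the identification of the hypothesis uses only a universal-coefficients \emph{embedding} $U/\ell^r U \hookrightarrow H^j(\bar{X},\mu_{\ell^r}^{\otimes a})$ (your claimed equality can fail when $H^{j+1}(\bar{X},\Z_\ell(a))$ has $\ell$-torsion, though the embedding suffices since triviality passes to submodules), and the case $d < j \le 2d$ is handled symmetrically by cupping with $h^{j-d}$ in the direction $H^{2d-j}\to H^j$, which your write-up (restricted to $j\le d$) leaves implicit.
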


\begin{proof}
Let us put
$U:=H^j(\bar{X},\Z_\ell(a))$: it is provided with the natural structure of $G=\Gu K$-module. Then the universal coefficients theorem \cite[Ch. V, Sect. 1, Lemma 1.11]{Milne}
gives us a canonical $\Gu K$-equivariant embedding
$$U/\ell^r U=H^j(\bar{X},\Z_\ell(a))/\ell^r H^j(\bar{X},\Z_\ell(a)) \hookrightarrow H^j(\bar{X},{\mu_n}^{\otimes a}).$$
Since the Galois action on $H^j(\bar{X},{\mu_n}^{\otimes a})$ is trivial,  it is also trivial on $U/\ell^r U$.
We have (in the notation of Sect. \ref{general})
$$U_E=H^j(\bar{X},\Z_\ell(a))\otimes_{\Z_{\ell}}\Q_{\ell}=H^j(\bar{X},\Q_\ell(a)).$$ 
Let 
$V:=H^{2d-j}(\bar{X},\Z_\ell(d-a))$: it has  the natural structure of $G=\Gu K$-module and
$$V_E=H^{2d-j}(\bar{X},\Z_\ell(d-a))\otimes_{\Z_{\ell}}\Q_{\ell}=H^{2d-j}(\bar{X},\Q_\ell(d-a)).$$
The cup product pairing gives rise to a $\Z_{\ell}$-bilinear $\Gu K$-invariant pairing  known as {\sl Poincar\'e duality} (\cite[Ch.
VI, Sect. 11, Cor. 11.2 on p. 276]{Milne}, \cite[p. 23]{Katz}, \cite[Ch. II, Sect. 1]{FK})
$$e: H^j(\bar{X},\Z_\ell(a))\times H^{2d-j}(\bar{X},\Z_\ell(d-a)) \to  \to
H^{2d}(\bar{X},\Z_{\ell}(d))\cong \Z_{\ell}.$$
It is known \cite{ZarhinP} that the induced pairing of free $\Z_{\ell}$-modules of finite rank
$$e: H^j(\bar{X},\Z_\ell(a))/{\tors}\times H^{2d-j}(\bar{X},\Z_\ell(d-a))/{\tors} \to \Z_{\ell}$$
is {\sl perfect and unimodular}.

Let us choose an invertible very ample sheaf $\mathcal{L}$   on $X$ and let 
$$h \in H^2(\bar{X},\Q_{\ell}(1))^{\Gu K}\subset H^2(\bar{X},\Q_{\ell}(1))$$ be its first $\ell$-adic Chern class. 
If $j\le d$ then the Hard Lefschetz Theorem (\cite{DeligneW2}, \cite[Ch. IV, Sect. 5, pp. 274--275]{FK})  tells us that cup multiplication by $(d-j)$th power of $h$ establishes an isomorphism between the $\Q_{\ell}$-vector spaces
$H^j(\bar{X},\Q_{\ell}(a))$ and $H^{2d-j}(\bar{X},\Q_{\ell}(a+d-j))$.  On the other hand, if $d \ge j$  then cup multiplication by the $(j-d)$th power of $h$ 
establishes an isomorphism between $\Q_{\ell}$-vector spaces
 $H^{2d-j}(\bar{X},\Q_{\ell}(a+d-j))$ and $H^j(\bar{X},\Q_{\ell}(a))$. In both cases the Galois-invariance of $h$ implies that the $\Q_{\ell}$-vector spaces
$U_E=H^j(\bar{X},\Q_{\ell}(a))$ and $H^{2d-j}(\bar{X},\Q_{\ell}(a+d-j))$ are isomorphic as $\Gu K$-modules.  On the other hand, the $\Gu K$-module 
$$H^{2d-j}(\bar{X},\Q_{\ell}(a+d-j))=H^{2d-j}(\bar{X},\Q_{\ell}(d-a+2a-j))=$$
$$H^{2d-j}(\bar{X},\Q_{\ell}(d-a))\otimes_{\Q_{\ell}} \Q_{\ell}(2a-j)\cong V_E(\chi)$$
where 
$$\chi:=\chi_{\ell}^{2a-j}: G=\Gu K \to \Z_{\ell}^{*}=\Lambda^{*}.$$
So the $G$-module $U_E$ is isomorphic to $V_E(\chi)$ and   Theorem \ref{mainLinear2} tells us  that
$$\bar{\chi}_{\ell^r}^{2a-j}(g)=(\chi_{\ell}(g))^{2a-j}\bmod  \ell^r\Z_{\ell}=\chi(g)\bmod \ell^r\Z_{\ell}=1 \ \forall g \in G=\Gu K.$$
\end{proof}

\begin{proof}[Proof of Theorem \ref{nGeneral}]
Since $\bet_j(\bar{X})\ne 0$, we have $j \le 2d$.
 Recall that $n$ is a positive integer that is not  divisible by $\fchar(K)$. Let $\ell$ be a prime dividing $n$ and let  $\ell^{r_n(\ell)}$ be the exact power of $\ell$ that divides $n$.  Applying Proposition  \ref{ajGeneral} to all such $\ell$ with $r=r_n(\ell)$ and using Remarks \ref{splitchar} and \ref{split}, we obtain that the character $\bar{\chi}_n^{2a-j}$ is trivial, which gives as the first  assertion  of Theorem \ref{nGeneral}. On the other hand, we know that $\bar{\chi}_n^{\phi(n)}$ is trivial. This implies that if
 $2a-j$ and $\phi(n)$ are relatively prime then $\bar{\chi}_n$ is itself trivial, i.e., $\mu_n\subset K$. This proves the second assertion of Theorem \ref{nGeneral}.
 \end{proof}


Now we use Theorem  \ref{nGeneral}  in order to prove Theorems  \ref{converse}  and \ref{converse2}.


\begin{rem}
In the statement of Theorem \ref{nGeneral} we do not require that $j$ is {\sl odd} and therefore its immediate Corollary \ref{converse} remains true without this assumption.  
However, if we drop this assumption in Corollary \ref{converse} (while keeping all the other ones) and assume instead that  $j$ is {\sl even} then $2a-j$ is also even and therefore $\phi(n)$ is odd, because it is relatively prime to $2a-j$. This implies that $n=2$ and therefore $\fchar(K) \ne 2$ and $K$ does {\sl not} contain a primitive square root of unity, i.e., $K$ does {\sl not} contain $-1$, which is absurd. 
\end{rem}

\begin{rem}
The second assertion of Theorem \ref{nGeneral}  (and its proof) remains true (valid) if in its statement we replace $\phi(n)$ by its divisor 
$\exp(n,K)$.
\end{rem}

\begin{proof}[Proof of Theorem \ref{mainRoot}] Since $a=(j\pm 1)/2$,  the integer $2a-j=\pm 1$ is relatively prime to  $\phi(n)$. Now the result follows
from already proven Theorem \ref{nGeneral}.

\end{proof}

\begin{proof}[Proof of Theorem \ref{converse2}]
Suppose that the Galois action on $H^j(\bar{X},{\mu_n}^{\otimes a})$ is trivial for some absolutely irreducible smooth projective variety $X$ with $\bet_j(\bar{X})\ne 0$. 
By Theorem \ref{nGeneral},  the character $\bar{\chi}_{n}^{2a-j}$ is trivial.  On the other hand, since $f:=2a-j$ is {\sl odd} and  $[K(\mu_n):K]$ is {\sl even}, Remark \ref{even} tells us that $\bar{\chi}_{n}^{2a-j}$ is {\sl nontrivial}. 
  This gives us a desired contradiction.
\end{proof}


\begin{thebibliography}{99}


\bibitem{DeligneW1} P. Deligne, {\sl La conjecture de Weil}. I. Publ. Math. IHES
{\bf  43} (1974), 273--307.

\bibitem{DeligneW2} P. Deligne, {\sl La conjecture de Weil}. II. Publ. Math. IHES
{\bf 52} (1980), 137--252.


\bibitem{FK} E. Freitag, R. Kiehl,  Etale cohomology and the Weil
conjecture. Ergebnisse der Math. 3 Folge, Band \textbf{13}, Springer
Verlag, Berlin Heidelberg New York, 1988.

\bibitem{Katz} N.M. Katz, {\sl Review of $l$-adic cohomology}. In:
Motives. Proc. Symp. Pure Math. {\bf 55}, Part 1.
American  Mathematical Society, Providence, RI, 1994, 21--30.

\bibitem{LangAV} S. Lang, Abelian varieties, 2nd edition. Springer-Verlag, New York, 1983.



\bibitem{Milne} J.S. Milne, \'Etale Cohomology. Princeton University
Press,  Princeton, NJ, 1980.


\bibitem{SerreMW} J.-P. Serre, Lectures on Mordell--Weil Theorem,
2nd edition.  Aspects of Mathematics {\bf E6}, Friedr. Viehweg \&
Son, Braunschweig, 1990.



\bibitem{ZarhinIzv1982} Yu. G. Zarhin,  {\sl The Brauer group of an abelian variety over a finite
field}.  Izv. Akad. Nauk SSSR Ser. Mat.  {\bf 46}  (1982), 211--243;
Mathematics of the USSR-Izvestiya {\bf 20} (1983), 203--234.

\bibitem{ZarhinCamb99} Yu. G. Zarhin,  {\sl Torsion of abelian varieties, Weil classes and cyclotomic extensions}. Math. Proc. Camb. Phil. Soc. {\bf 126} (1999), 1--15.

\bibitem{ZarhinP} Yu. G. Zarhin,  {\sl Poincar\'e duality and unimodularity}. In: Geometry and Arithmetic,
van der Geer Festschrift (C. Faber,  G. Farkas,   R. de Jong, eds.),
pp. 369--376. European Mathematical Society, Z\"urich, 2012.

\end{thebibliography}
\end{document}